\newtheorem{theorem}{Theorem}[section]
\newtheorem{lemma}{Lemma}[section]
\newtheorem{proposition}{Proposition}[section]
\theoremstyle{remark}
\begin{document}

\title{A note on the relation $\mathcal{J}$ in $le$-semigroups}
\author{Aida Shasivari\\
Polytechnic University of Tirana\\
Faculty of Mathematical Engineering\\
Tirana, Albania\\
aida.shasivari@yahoo.com\\
Elton Pasku\\
University of Tirana\\
Faculty of Natural Sciences\\
Tirana, Albania\\
elton.pasku@fshn.edu.al}
\date{}
\maketitle

\begin{abstract}
We prove that if $S$ is a $le$-semigroup in which left ideal elements commute (condition which is called $\mathbf{\Lambda}$), then any $\mathcal{J}$-class satisfying the Green condition is a subsemigroup of $S$. As a corollary of this we show that semisimple $le$-semigroups satisfying $\mathbf{\Lambda}$ are precisely those that decompose as a semilattice of left simple $\vee e$-semigroups which are in addition semisimple, intra-regular and satisfy $\mathbf{\Lambda}$. \newline
\textbf{Key words}: Ordered semigroups, intra-regular, semilattice decomposition. \newline
\textbf{Mathematics Subject Classification}: 06Fxx, 06F05, 
\end{abstract}

\section{Introduction and preliminaries}

The study of Green relations in ordered semigroups has been a reoccurring theme and a great corpus of results has been obtained through the last 30 years or so.  Relations $\mathcal{B}$, $\mathcal{Q}$ and $\mathcal{H}$ in $le$-semigroups are ''closer to the points" and seem to be more adequate to study the way regularity or intra-regularity is transmitted from points to subsets, but fail a key property their analogue $\mathfrak{H}$ in plain semigroups has, which says that any $\mathfrak{H}$-class containing the product of two arbitrary elements that belong to the class (commonly known as the Green condition) is a subgroup of the semigroup. Relations $\mathcal{L}$ and $\mathcal{R}$ also do not seem to have this property either but have many other interesting properties that resemble to their analogues in plain semigroups and are proved to be indispensable in the study of ordered semigroups. Relation $\mathcal{J}$ in ordered semigroup is proved fruitful in the study of intra-regularity in particular. For example Kehayopulu in \cite{intra-93} and Kehayopulu at alt. in \cite{intraP-93} have shown that intra-regular ordered semigroups are precisely those that decompose as a semilattice of simple subsemigroups. In the present paper we look for conditions under which the $\mathcal{J}$-classes of a $le$-semigroup satisfying the Green condition are subsemigroups. The $le$-semigroups we work with satisfy the condition that every two left ideal elements commute with each other and call this condition $\mathbf{\Lambda}$. This condition is not unnatural. In \cite{gms} Kehayopulu has shown that in $le$-semigroups the condition $\mathbf{\Lambda}$ together with left regularity are equivalent to $l \vartriangleright l$ which in turn means the left regularity and that the left ideal elements are at the same time right ideal elements. We also show that semisimple $le$-semigroups that satisfy $\mathbf{\Lambda}$ have a nice decomposition as a semilattice of left simple $\vee e$-semigroups which are semisimple, intra-regular and satisfy $\mathbf{\Lambda}$ as well. This result is in the same spirit as those in \cite{intra-93} and \cite{intraP-93}.

We give below some basic notions that will be used throughout the paper. A \textit{$po$-semigroup} (\textit{ordered semigroup}) is a triple $(S,\leq, \cdot)$ where $\leq$ is an order relation in $S$ and $\cdot$ is an associative multiplication in $S$ that satisfy the properties
\begin{equation*}
a \leq b \Rightarrow xa \leq xb \text{ and } ax \leq bx \text{ for all } x \in S.
\end{equation*}
If $(S,\leq, \cdot)$ is a $po$-semigroup possessing a greatest element $e$, then it will be called a $poe$-semigroup and it is called a $\vee e$-semigroup if $(S,\leq)$ is an upper semilattice. If in addition it happens that $(S,\leq)$ is a lattice (the meet $\wedge$ and the joint $\vee$ of any two elements of $S$ exists), then $S$ will be called a $le$-semigroup. The standard notation for the $le$-semigroup in this case is $\langle S, \cdot, \vee, \wedge \rangle$. Here the order relation is not made explicit but it is understood that $a \leq b$ iff $a \wedge b=a$. An element $x$ of a $poe$-semigroup $S$ is regular if $x \leq xex$, and it is intra-regular if $e \leq e x^{2}e$. $S$ is called regular (resp. intra regular) if every element of $S$ is regular (resp. intra-regular). A $poe$-semigroup $S$ is called semisimple if for all $x \in S$, $x \leq e xex e$. An element $t \in S$ is called semiprime if for every $a \in S$ such that $a^{2} \leq t$, then $a \leq t$. An element of $S$ is called a right (resp. left) ideal element if $xe \leq x$ (resp. $ex \leq x$). It is called an ideal element if it is both a left and a right ideal element of $S$. A $poe$-semigroup $S$ is called a left simple semigroup if $e$ is the only left ideal element of $S$. We say that a $poe$-semigroup $S$ decomposes as a semilattice of left simple $poe$-semigroups if there exists a semilattice $Y$ and a family $\{S_{\alpha}: \alpha \in Y\}$ of left simple $poe$-subsemigroups of $S$ such that:
\begin{itemize}
\item [1)] $S_{\alpha} \cap S_{\beta} = \emptyset$ for all $\alpha \neq \beta$ in $Y$,
\item[2)] $S=\cup \{S_{\alpha}: \alpha \in Y\}$,
\item[3)] $S_{\alpha} \cdot S_{\beta} \subseteq S_{\alpha \beta}$.
\end{itemize}

Let $\mathbf{S}$ stands for an $le$-semigroup $\langle S, \cdot, \vee, \wedge \rangle$. We define the map
\begin{equation*}
t: S \rightarrow S \text{ by setting } t(x)=exe \vee xe \vee ex \vee x,
\end{equation*}
and show that $t(x)$ is the least ideal element which is bigger than $x$. That $t(x)$ is an ideal element follows from the fact that
\begin{equation*}
(exe \vee xe \vee ex \vee x)\cdot e=exe^{e} \vee xe^{e} \vee exe \vee xe \leq exe \vee xe \vee ex \vee x,
\end{equation*}
and that
\begin{equation*}
e \cdot (exe \vee xe \vee ex \vee x)=e^{e}xe \vee exe \vee e^{e}x \vee ex \leq exe \vee xe \vee ex \vee x.
\end{equation*}
Also the definition of $t(x)$ shows that $t(x) \geq x$. It remains to show that if $\tau$ is an ideal element of $\mathbf{S}$ such that $x \leq \tau$, then $t(x) \leq \tau$. Indeed, 
\begin{align*}
t(x)&=exe \vee xe \vee ex \vee x \\
&\leq e \tau e \vee \tau e \vee e \tau \vee \tau && \text{ since $x \leq \tau$ } \\
&\leq \tau && \text{ since $\tau$ is an ideal element.}
\end{align*}
Now we are ready to define the relation $\mathcal{J}$ in $\mathbf{S}$. We say that two elements $x,y \in S$ are $\mathcal{J}$-related, written as $(x,y) \in \mathcal{J}$ if and only if $t(x)=t(y)$. In other words
\begin{equation*}
\mathcal{J}=\{(x,y) \in S \times S | t(x)=t(y) \}.
\end{equation*}
It is straightforward that $\mathcal{J}$ is an equivalence relation, hence we can talk about its $\mathcal{J}$-classes. A key property of the $\mathcal{J}$-classes is depicted in the following.
\begin{lemma} \label{kp}
Every $\mathcal{J}$-class $J$ of $\mathbf{S}$ has a unique ideal element $t(a)$ where $a$ is an arbitrary element of $J$.
\end{lemma}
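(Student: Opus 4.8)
The plan is to exploit the single key fact already established in the excerpt, namely that $t(x)$ is the \emph{least} ideal element which is $\geq x$. From this I would first isolate the crucial observation that $t$ fixes every ideal element: if $x$ is itself an ideal element, then $t(x)=x$. Indeed, $x$ is an ideal element satisfying $x \leq x$, so the minimality property (applied with $\tau=x$) gives $t(x) \leq x$; combined with the already-noted inequality $t(x) \geq x$, this forces $t(x)=x$. This fixed-point fact is really the whole engine of the proof.

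With it in hand, the lemma splits into three short verifications, which I would carry out in this order. First, the element attached to the class is well defined: if $a,b \in J$, then $(a,b) \in \mathcal{J}$, so $t(a)=t(b)$ straight from the definition of $\mathcal{J}$, and hence the candidate ideal element does not depend on the chosen representative $a$. Second, this candidate actually lies in $J$: since $t(a)$ is an ideal element, the fixed-point observation gives $t(t(a))=t(a)$, which is precisely the assertion $(t(a),a) \in \mathcal{J}$, so $t(a) \in J$. Third, uniqueness within the class: if $\tau$ is any ideal element belonging to $J$, then $(\tau,a) \in \mathcal{J}$ yields $t(\tau)=t(a)$, while the fixed-point observation gives $t(\tau)=\tau$; combining these two equalities gives $\tau=t(a)$.

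I do not anticipate a genuine obstacle here; the entire content is packaged into the minimality of $t(x)$, and once the fixed-point property $t(x)=x$ (for $x$ an ideal element) has been extracted, each of the three points is a one-line deduction. The only point requiring care is the phrasing of the conclusion, so that it captures both senses in which the ideal element is ``unique'': that $t(a)$ is independent of the choice of $a \in J$, and that $t(a)$ is the sole ideal element contained in $J$. I would state and prove both explicitly rather than leaving the reader to infer which is meant.
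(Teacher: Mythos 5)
Your proposal is correct and follows essentially the same route as the paper: both rest on the fact that $t$ fixes ideal elements, use it to show $t(a)\in J$ via $t(t(a))=t(a)$, and conclude uniqueness by combining $t(\tau)=\tau$ with $t(\tau)=t(a)$ for any ideal element $\tau\in J$. The only difference is presentational — you derive the fixed-point property explicitly from the minimality of $t(x)$ and spell out well-definedness, whereas the paper treats both as immediate.
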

\begin{proof}
First, for any $a \in J$ we show that $t(a) \in J$. Indeed, since $t(a)$ is an ideal element, $t(t(a))=t(a)$ and then the definition of $\mathcal{J}$ shows that $(a, t(a)) \in \mathcal{J}$ or equivalently, $t(a) \in J_{a}$. Finally, if $\tau \in J$ is an ideal element, then on the one hand $t(\tau)=\tau$ and on the other $t(\tau)=t(a)$ since $a \in J$, and so we get that $\tau=t(a)$.
\end{proof}
The element $t(a)$ of the previous lemma is called the \textit{representative ideal element} of $J$.

\section{The main results}

A $\mathcal{J}$-class is said to satisfy the \textit{Green condition} if there are $b,c \in J$ such that $bc \in J$. The representative ideal element plays a key role in regard with the Green condition as the following shows. 
\begin{theorem} \label{t-idemp}
A $\mathcal{J}$-class $J$ satisfies the Green condition if and only if its representative ideal element $\tau$ is an idempotent.
\end{theorem}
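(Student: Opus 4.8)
The plan is to prove the two implications separately, with the reverse direction carrying all the content.

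For the easy direction, suppose the representative ideal element $\tau$ is idempotent. By Lemma \ref{kp}, $\tau = t(a)$ itself lies in $J$, so I would simply take $b = c = \tau$; then $bc = \tau^2 = \tau \in J$, which witnesses the Green condition directly.

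For the converse, assume there exist $b, c \in J$ with $bc \in J$, so that $t(b) = t(c) = t(bc) = \tau$. The idea is to establish $\tau^2 = \tau$ by proving the two inequalities separately. First I would check $\tau^2 \leq \tau$: since $\tau \leq e$ (as $e$ is the greatest element) and $\tau$ is a right ideal element, $\tau^2 = \tau\tau \leq \tau e \leq \tau$. The crux of the argument is the observation that $\tau^2$ is itself an ideal element: from $\tau e \leq \tau$ and $e\tau \leq \tau$ one obtains $\tau^2 e = \tau(\tau e) \leq \tau\tau = \tau^2$ and $e\tau^2 = (e\tau)\tau \leq \tau\tau = \tau^2$. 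Next, because $b \leq t(b) = \tau$ and $c \leq t(c) = \tau$ and multiplication is order-preserving, I get $bc \leq \tau c \leq \tau^2$. Now $\tau^2$ is an ideal element lying above $bc$, so the characterization of $t(bc)$ as the \emph{least} ideal element above $bc$ forces $\tau = t(bc) \leq \tau^2$. Combining this with $\tau^2 \leq \tau$ yields $\tau^2 = \tau$.

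The argument is short once the right object, namely $\tau^2$, is singled out. I expect the only point requiring care to be the verification that $\tau^2$ genuinely qualifies as an ideal element, since that is exactly what licenses applying the universal (minimality) property of $t$ to it; everything else is routine use of order-preservation and of $\tau \leq e$. There is no deeper obstacle beyond this bookkeeping.
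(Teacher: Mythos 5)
Your proof is correct and follows essentially the same route as the paper: both establish $bc \leq \tau^2$, observe that $\tau^2$ is an ideal element, and invoke the minimality of $t(bc)=\tau$ to get $\tau \leq \tau^2$, combined with the easy inequality $\tau^2 \leq \tau$. You merely spell out two details the paper leaves implicit (that $\tau^2$ is an ideal element, and the witness $b=c=\tau$ for the easy direction), which is fine.
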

\begin{proof}
The if part is obvious so it remains to prove that if $b,c, bc \in J$, then $\tau=\tau^{2}$.  Since from lemma \ref{kp} $b,c \leq \tau$, then $bc \leq \tau^{2}$. Passing to ideal element generated from the both sides in the last inequality we get that $t(bc) \leq t(\tau^{2})$. But $\tau^{2}$ is an ideal element as well and so $t(bc) \leq \tau^{2}$. Recalling that $bc \in J_{\tau}$, we have $\tau \leq \tau^{2}$ which together with the obvious inequality $\tau^{2} \leq \tau$ imply the desired $\tau = \tau^{2}$.
\end{proof}

It is interesting to ask under what conditions a $\mathcal{J}$-class satisfying the Green condition forms a subsemigroup of $\mathbf{S}$. The answer to the case when the class is required to be a group is rather easy and is proved in proposition \ref{gc}. Before we need the following.
\begin{lemma} \label{lp}
Assume the $\mathcal{J}$-class $J$ is a subsemigroup of $\mathbf{S}$. If $\tau$ is the representative ideal element of $J$, then for every element $x \in J$
\begin{equation*}
\tau x \tau =\tau=exe \text{ and } \tau x e =\tau = e x \tau.
\end{equation*}
In particular we have $\tau e=\tau=e \tau$.
\end{lemma}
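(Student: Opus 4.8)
The plan is to first extract the idempotency of $\tau$ from the hypothesis, and then to leverage the absorption identities $\tau e = \tau = e\tau$ to collapse every product in sight. Since $J$ is assumed to be a subsemigroup, for any $x \in J$ the product $x^{2}$ again lies in $J$, so $J$ satisfies the Green condition; Theorem \ref{t-idemp} then yields $\tau = \tau^{2}$. This idempotency is the engine of the whole argument.

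Next I would establish the ``in particular'' claim $\tau e = \tau = e\tau$, since it feeds everything else. Because $e$ is the greatest element we have $\tau \leq e$, and monotonicity of the multiplication gives $\tau = \tau^{2} = \tau\tau \leq \tau e$; on the other hand $\tau e \leq \tau$ since $\tau$ is a right ideal element. Hence $\tau e = \tau$, and the symmetric computation with the left ideal property gives $e\tau = \tau$.

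With these absorption identities in hand, the equality $\tau x\tau = \tau$ follows cleanly. Fix $x \in J$. Because $J$ is a subsemigroup and $\tau \in J$ (by Lemma \ref{kp}), the product $\tau x\tau$ again belongs to $J$. Moreover $\tau x\tau$ is itself an ideal element: indeed $(\tau x\tau)e = \tau x(\tau e) = \tau x\tau$ and $e(\tau x\tau) = (e\tau)x\tau = \tau x\tau$. But by Lemma \ref{kp} each $\mathcal{J}$-class contains a unique ideal element, namely its representative $\tau$; therefore $\tau x\tau = \tau$. (Equivalently one may compute $t(\tau x\tau)$ directly and watch all four joinands collapse to $\tau x\tau$.) The remaining identities then fall out by sandwiching between $x \leq t(x) = \tau$ and $\tau \leq e$: from above $exe \leq e\tau e = \tau$, while from below $\tau = \tau x\tau \leq exe$, so $exe = \tau$; replacing one outer $\tau$ by $e$ in the same two-sided squeeze produces $\tau x e = \tau$ and $ex\tau = \tau$.

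The only genuinely delicate point is the passage to $\tau x\tau = \tau$, and even there the effort is front-loaded into establishing $\tau e = \tau = e\tau$: once those hold, $\tau x\tau$ is forced to be an ideal element of $J$ and Lemma \ref{kp} closes the gap with no computation. The subsemigroup hypothesis is used exactly twice, to produce an element ($x^{2}$, respectively $\tau x\tau$) that is guaranteed to remain inside $J$; everything else is monotonicity of the product together with $\tau \leq e$. I therefore expect no real obstacle beyond keeping track that each sandwiched inequality invokes the correct ideal-element property.
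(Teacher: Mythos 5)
Your proof is correct and takes essentially the same route as the paper: both arguments hinge on observing that $\tau x \tau$ is an ideal element lying in $J$ (so that Lemma \ref{kp} forces $\tau x \tau = \tau$) and then sandwiching, $\tau = \tau x \tau \leq \tau x e \leq exe \leq \tau$. The only difference is ordering: you establish $\tau e = \tau = e\tau$ up front (from $\tau = \tau^{2}$ via Theorem \ref{t-idemp}) and use it in the ideal-element check, whereas the paper needs only the inequalities $\tau e \leq \tau$ and $e\tau \leq \tau$ for that check and recovers the absorption identities at the very end by substituting $x = \tau$.
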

\begin{proof}
It is obvious that $\tau x \tau$ is a ideal element of $\mathbf{S}$. Since $J$ is assumed to form a subsemigroup and $x \in J$, then $\tau x \tau \in J$ and so by lemma \ref{kp} we get $\tau x \tau =\tau$. Further,
\begin{equation*}
\tau=\tau x \tau \leq \tau x e \leq exe \leq \tau,
\end{equation*}
proving that $\tau x e =\tau=exe$. In a similar fashion one can prove that $ex \tau=\tau=exe$. If we now replace $x$ by $\tau$ in the last two equalities and recall from theorem \ref{t-idemp} that $\tau=\tau^{2}$, then we obtain $\tau e=\tau = e \tau$.
\end{proof}

The following lemma shows that the above equalities are always ensured if we assume for $S$ to be regular and intra regular. 
\begin{proposition} \label{i+r}
Assume that $S$ is both regular and intra regular then for every $x \in S$ the ideal element $\tau$ generated from $x$ satisfies the equalities of lemma \ref{lp}.
\end{proposition}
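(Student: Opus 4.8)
The plan is to show first that the representative ideal element is simply $\tau = exe$, and then to read off all the remaining equalities from this description together with two elementary facts about $e$. I would begin by recording that, since $e$ is the greatest element, $x \le e$ and $e^{2} \le e$ hold for every $x$, and that intra-regularity forces $e$ to be idempotent: applying the defining inequality $e \le e x^{2} e$ to the element $x = e$ gives $e \le e^{4}$, while $e^{4} \le e^{3} \le e^{2} \le e$ is automatic, so $e = e^{2}$.

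The core step is to prove $\tau = exe$. Since $\tau = t(x) = exe \vee xe \vee ex \vee x$, it suffices to show that each of $x$, $xe$, $ex$ lies below $exe$. For $x$ itself I would use intra-regularity directly, namely $x \le e \le e x^{2} e = exxe \le exe$, where the last inequality bounds the inner $x$ by $e$ and uses $e^{2} \le e$. The inequalities $xe \le exe$ and $ex \le exe$ then follow at once by multiplying $x \le exe$ on the appropriate side by $e$ and again absorbing the extra factor via $e^{2} \le e$. Hence the join collapses and $\tau = exe$.

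With $\tau = exe$ in hand, the equalities $\tau e = \tau = e\tau$ are immediate from $e = e^{2}$, since $exe \cdot e = exe = e \cdot exe$. For the remaining identities I would treat each as a pair of inequalities. The upper bounds are routine: $\tau x e = exexe \le exe = \tau$ and $ex\tau = exexe \le exe = \tau$ follow by bounding an interior $x$ by $e$, while $\tau x \tau \le \tau e \tau = \tau^{2} \le \tau$ uses $x \le e$, $\tau e = \tau$, and the fact that $\tau^{2} = \tau\tau \le \tau e = \tau$ (an ideal element absorbs $e$). The lower bounds are where regularity enters, and they constitute the main obstacle: from $x \le xex$ I get $exe \le e(xex)e = exexe$, which yields $\tau \le \tau x e$ and $\tau \le ex\tau$; applying $x \le xex$ a second time, now to the leading interior $x$ of $exexe$, produces $exexe \le exexexe = \tau x \tau$, and hence $\tau \le \tau x \tau$. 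Combining the matching bounds delivers $\tau x e = \tau = ex\tau$ and $\tau x \tau = \tau$, which are exactly the equalities of Lemma \ref{lp}.

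I anticipate that the delicate point is precisely this iterated use of regularity to manufacture the pattern $exexexe$ inside $\tau x \tau$ starting from $exe$, together with the observation that intra-regularity is exactly what is needed both to collapse $t(x)$ to $exe$ and to make $e$ idempotent. Once these two facts are isolated, every verification reduces to a short monotonicity argument using $x \le e$ and $e = e^{2}$.
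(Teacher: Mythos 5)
Your proof is correct, but it takes a genuinely different route from the paper's. The paper keeps $\tau$ in its expanded form $exe \vee ex \vee xe \vee x$, proves $\tau x \tau \geq \tau$ by bounding the product term by term from below (this single computation is where regularity and intra-regularity are used, with the details left implicit), and then extracts all the remaining equalities from the sandwich $\tau = \tau x \tau \leq \tau x e \leq exe \leq \tau$. You instead collapse the join first: intra-regularity gives $x \leq e \leq ex^{2}e \leq exe$, hence $\tau = exe$, and then each equality is verified as a pair of inequalities, with regularity entering only in the lower bounds $\tau \leq \tau x e$, $\tau \leq ex\tau$, $\tau \leq \tau x \tau$. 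Your version makes the division of labor between the two hypotheses transparent, which the paper's single mixed computation does not. It is worth noting that your key chain proves more than you use: since $e \leq ex^{2}e \leq exe \leq e$, intra-regularity alone forces $exe = e$, hence $\tau = t(x) = e$ for every $x \in S$; combined with $e = e^{2}$, all the equalities of Lemma \ref{lp} then hold trivially ($\tau x \tau = exe = e = \tau$, $\tau x e = exe = \tau$, and so on), so regularity is in fact not needed at all --- a simplification that neither your argument nor the paper's exploits.
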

\begin{proof}
Indeed,
\begin{align*}
\tau x \tau &= (exe \vee ex \vee xe \vee x)\cdot x \cdot (exe \vee ex \vee xe \vee x) \\
& \geq exe \cdot x \cdot exe \vee exe \cdot x \cdot ex \vee exe \cdot x \cdot xe \vee xe \cdot x \cdot exe\\
& \geq exe \vee ex \vee x \vee xe=\tau.
\end{align*}
On the other hand $\tau x \tau \leq \tau e \leq \tau$, and therefore we have $\tau x \tau =\tau$. Also we have
\begin{equation*}
\tau =\tau x \tau \leq \tau x e \leq exe \leq \tau,
\end{equation*}
proving that $\tau x e =exe =\tau$. The equality $e x \tau=\tau$ is proved by symmetry and the last two equalities of lemma \ref{lp} follow in the same way.
\end{proof}

The following is an analogue of proposition 2.3 of \cite{pp02}.

\begin{proposition} \label{gc}
A $\mathcal{J}$-class $J$ is a subgroup of $\mathbf{S}$ if and only if $J$ consists of a single idempotent element.
\end{proposition}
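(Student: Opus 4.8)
The plan is to dispatch the two directions separately, with nearly all of the work in the \emph{only if} direction. For the \emph{if} direction, suppose the $\mathcal{J}$-class $J$ equals $\{f\}$ for a single element $f$ with $f^{2}=f$. Then $\{f\}$ is closed under multiplication and $f$ serves simultaneously as its own identity and inverse, so $J$ is a (trivial) subgroup of $\mathbf{S}$; nothing further is needed here.

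For the \emph{only if} direction, assume $J$ is a subgroup. First I would note that a subgroup is in particular a subsemigroup, so Lemma \ref{lp} applies and yields, for every $x \in J$, the key relation $\tau x \tau = \tau$, where $\tau$ denotes the representative ideal element of $J$. Next, since $J$ is a group, the product of any two of its elements again lies in $J$, so the Green condition holds; by Theorem \ref{t-idemp} this forces $\tau^{2}=\tau$, that is, $\tau$ is idempotent. Because $\tau \in J$ by Lemma \ref{kp}, we now have an idempotent lying inside the group $J$, and in any group the only idempotent is the identity. Hence $\tau$ is the identity element of $J$.

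The collapse is then immediate. For an arbitrary $x \in J$, the fact that $\tau$ is the group identity gives $\tau x = x = x \tau$, whence $\tau x \tau = x$. Comparing this with the relation $\tau x \tau = \tau$ supplied by Lemma \ref{lp}, we obtain $x = \tau$. Thus every element of $J$ coincides with $\tau$, so $J = \{\tau\}$ is a single idempotent element, as required.

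The main obstacle, really the only nontrivial point, is recognizing that the group-theoretic fact that the unique idempotent of a group is its identity can be combined with the external relation $\tau x \tau = \tau$ coming from Lemma \ref{lp}. Lemma \ref{lp} is a statement about the ambient multiplication of $\mathbf{S}$, whereas the word \emph{identity} refers to the group structure on $J$; the argument hinges on these two multiplications agreeing, the group operation on $J$ being the restriction of $\cdot$, so that $\tau x = x$ in the group genuinely means $\tau x = x$ in $S$. Once this is in place, the two expressions for $\tau x \tau$ can be equated and the single-element conclusion drops out with no additional computation.
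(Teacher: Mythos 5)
Your proof is correct, and it reaches the crucial intermediate fact---that the representative ideal element $\tau$ is the group identity of $J$---by a genuinely different route than the paper. The paper works with the group inverse $\bar{\tau}$ of $\tau$: using the equality $e\tau=\tau=\tau e$ from Lemma \ref{lp} it computes $ei=e\tau\bar{\tau}=\tau\bar{\tau}=i=ie$, concludes that the group identity $i$ is an ideal element of $\mathbf{S}$ lying in $J$, and then invokes the uniqueness of the ideal element in a $\mathcal{J}$-class (Lemma \ref{kp}) to get $\tau=i$. You instead observe that a group trivially satisfies the Green condition, apply Theorem \ref{t-idemp} to get $\tau^{2}=\tau$, and use the elementary fact that the only idempotent in a group is its identity. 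After that, both arguments finish identically, collapsing $J$ via $\tau = \tau x \tau = x$ for every $x \in J$. Your route avoids any computation with inverses and needs only the $\tau x \tau=\tau$ clause of Lemma \ref{lp}, at the price of invoking Theorem \ref{t-idemp} explicitly; note, though, that the paper's proof carries the same dependence implicitly, since the equality $e\tau=\tau$ it uses is established in Lemma \ref{lp} by appeal to Theorem \ref{t-idemp}. The paper's route yields the small bonus that the group identity is itself an ideal element, which your argument obtains only after the fact. Your closing remark---that the group operation on $J$ is the restriction of the ambient multiplication of $\mathbf{S}$, so that idempotency and the identity law mean the same thing inside and outside the group---is exactly the point that makes either argument legitimate, and it is well taken.
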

\begin{proof}
The if part is obvious. Now assume that $J$ is a subgroup of $\mathbf{S}$ and let $\tau$ be the representative ideal element of $J$, $\bar{\tau}$ its inverse and $i$ the unit element. We see first that $i$ is a ideal element. Indeed,
\begin{align*}
ei&=e \tau \bar{\tau} \\
&=\tau \bar{\tau} && \text{from lemma \ref{lp}}\\
&=i=ie && \text{in a similar fashion to above,}
\end{align*}
proving the claim. But $\tau$ is the only ideal element of $J$ so $\tau=i$. Further if $x \in J$, then
\begin{align*}
i&=\tau && \text{from above}\\
&=\tau x \tau && \text{from lemma \ref{lp}}\\
&=ixi=x && \text{since $i$ is the unit,}
\end{align*}
thus proving that $J=\{i\}$ is a singleton.
\end{proof}

In what follows we will consider $le$-semigroups that satisfy the condition $\mathbf{\Lambda}$: \textit{any two left ideal elements commute with each other}. We prove in our theorem \ref{main} that in such semigroups any $\mathcal{J}$-class that satisfies the Green condition is a subsemigroup. 
Assume we are given a $le$-semigroup $\langle S, \cdot, \vee, \wedge \rangle$ with the greatest element $\varepsilon$ and let $I$ be the set of ideal elements of $S$. For every $e \in I$ we let $]e]=\{ x \in S : x \leq e \}$. It is clear that $]e]$ is a subsemigroup of $S$ since if $x,y \in ]e]$, then $xy \leq ee \leq e \varepsilon \leq e$. With the order relation of $S$ restricted in $]e]$, the latter forms a $le$-semigroup with the greatest element $e$. We denote by $\mathcal{J}^{(e)}$ the relation $\mathcal{J}$ defined in $]e]$. With the above notations we have the following.
\begin{lemma} \label{idp}
If $e \in I$ is an idempotent, then $J_{e}^{(e)}$ is a subsemigroup. 
\end{lemma}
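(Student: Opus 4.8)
The plan is to first turn membership in $J_e^{(e)}$ into a single transparent equation, and then to exploit the idempotency of $e$ together with $\mathbf{\Lambda}$ to pin down the left multiples $ex$ with $x \in J_e^{(e)}$ completely.

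First I would record a clean membership criterion. Since $e$ is idempotent, the $t$-map of $]e]$ evaluated at the top element $e$ gives $e\cdot e\cdot e \vee e\cdot e \vee e\cdot e \vee e = e$, so $J_e^{(e)} = \{x \in\, ]e] : t(x) = e\}$, where $t$ is computed inside $]e]$, i.e. $t(x) = exe \vee xe \vee ex \vee x$. Distributing multiplication over $\vee$ and using $e^2 = e$ I would compute $e\,t(x)\,e = exe$; hence for $x \in J_e^{(e)}$ one gets $exe = e\,t(x)\,e = e\cdot e\cdot e = e$, and conversely $exe = e$ forces $e = exe \le t(x) \le e$. Thus $J_e^{(e)} = \{x \le e : exe = e\}$, the identity I will use throughout.

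Now take $x,y \in J_e^{(e)}$, so $exe = e = eye$, and the goal is $e(xy)e = exye = e$. The key observation is that $ex$ and $ey$ are left ideal elements of the \emph{ambient} semigroup $S$: because $e \in I$ we have $\varepsilon e \le e$, and therefore $\varepsilon(ex) = (\varepsilon e)x \le ex$, and similarly for $ey$. Condition $\mathbf{\Lambda}$ then yields $exey = (ex)(ey) = (ey)(ex) = eyex$, while the sandwich identities collapse both sides via $exey = (exe)y = ey$ and $eyex = (eye)x = ex$. Comparing gives $ex = ey$ for all $x,y \in J_e^{(e)}$; specialising $y = e$ (which lies in $J_e^{(e)}$) produces $ex = e\cdot e = e$.

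With $ex = e$ the conclusion is immediate: $exye = (ex)(ye) = e(ye) = eye = e$, so $t(xy) \ge e(xy)e = e$, while $xy \le e\cdot e = e$ forces $t(xy) \le e$; hence $t(xy) = e$ and $xy \in J_e^{(e)}$, so the class is closed under multiplication. I expect the main obstacle to be exactly the middle step, since from $exe = e = eye$ alone one cannot deduce $exye = e$: the whole argument turns on recognising that the right objects to feed into $\mathbf{\Lambda}$ are the left ideal elements $ex$ and $ey$ \emph{of $S$} rather than anything intrinsic to $]e]$, a point that rests on combining the ideal-element inequality $\varepsilon e \le e$ with the idempotency of $e$.
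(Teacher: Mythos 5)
Your proof is correct and rests on exactly the same key ideas as the paper's: that $ex$ is a left ideal element of the ambient semigroup $S$ (via $\varepsilon(ex)=(\varepsilon e)x \le ex$), that membership in $J_e^{(e)}$ amounts to $exe=e$, and that condition $\mathbf{\Lambda}$ applied to such elements collapses the product $e(xy)e$ to $e$. The only cosmetic difference is that the paper commutes $ex$ with $e$ inside a single chain of equalities, whereas you commute $ex$ with $ey$ and extract the (stronger, but not needed) intermediate fact $ex=e$ for every $x$ in the class; the mechanism is identical.
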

\begin{proof}
First, for every $x \in S$, $ex$ is left ideal element of $S$ since $\varepsilon(ex)=(\varepsilon e)x \leq ex$. Second, it is easy to see that for every $x \in J_{e}^{(e)}$ we have $e=exe$. Further, if $x, y \in J_{e}^{(e)}$, then we can write
\begin{align*}
e=(exe)(eye)&=e (xe)(ye)=(ex)e(ye) && \text{ $e$ is an idempotent}\\
& =e(ex)(ye) && \text{ from condition $\mathbf{\Lambda}$}\\
&= e(xy)e \leq e^{4}=e,
\end{align*}
from which follows easily that $xy \in J_{e}^{(e)}$. 
\end{proof}

\begin{theorem} \label{main}
If $S$ satisfies condition $\mathbf{\Lambda}$, then any $\mathcal{J}$-class satisfying the Green condition is a subsemigroup of $S$.
\end{theorem}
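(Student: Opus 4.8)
The plan is to reduce the theorem to Lemma~\ref{idp} by identifying $J$ with a $\mathcal{J}$-class computed inside a principal subsemigroup. Write $\tau$ for the representative ideal element of $J$. Because $J$ satisfies the Green condition, Theorem~\ref{t-idemp} makes $\tau$ idempotent, and Lemma~\ref{kp} gives $x\le\tau$ for every $x\in J$, so $J\subseteq\,]\tau]$. Since $\tau$ is now an idempotent ideal element, Lemma~\ref{idp} applied with $e=\tau$ already yields that $J_{\tau}^{(\tau)}$, the $\mathcal{J}^{(\tau)}$-class of $\tau$ in $]\tau]$, is a subsemigroup. As $]\tau]$ is a subsemigroup of $S$ whose multiplication is the restriction of that of $S$, the whole statement will follow once I establish the set equality $J=J_{\tau}^{(\tau)}$.

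The core of the argument is the chain of equivalences, valid for every $x\le\tau$,
\[
x\in J \iff \tau x\tau=\tau \iff x\in J_{\tau}^{(\tau)},
\]
all driven by a sandwiching computation that uses only that $\tau$ is an idempotent ideal element. In one direction, assuming $\tau x\tau=\tau$, I note that $x\le\tau$ forces $\tau x\tau\le\varepsilon x\varepsilon\le t(x)\le\tau$ and also $\tau x\tau\le t^{(\tau)}(x)\le\tau$; hence $t(x)=\tau$ and $t^{(\tau)}(x)=\tau$, i.e.\ $x\in J$ and $x\in J_{\tau}^{(\tau)}$. In the other direction, starting from $x\in J$ (resp.\ $x\in J_{\tau}^{(\tau)}$), I multiply the defining join $t(x)$ (resp.\ $t^{(\tau)}(x)$) by $\tau$ on both sides: distributing the product over the join and absorbing every occurrence of $\varepsilon$ through $\tau\varepsilon\le\tau$ and $\varepsilon\tau\le\tau$, together with $\tau^{2}=\tau$, collapses $\tau\,t(x)\,\tau$ and $\tau\,t^{(\tau)}(x)\,\tau$ down to $\tau x\tau$. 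Since each of these equals $\tau^{3}=\tau$ under the hypothesis $t(x)=\tau$ (resp.\ $t^{(\tau)}(x)=\tau$), I obtain $\tau x\tau=\tau$, closing the chain.

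Granting the equivalence, $J=J_{\tau}^{(\tau)}$ is immediate, and combining it with Lemma~\ref{idp} shows that $J$ is closed under the multiplication of $S$. I expect the main obstacle to be conceptual rather than computational: the ideal elements of $S$ are defined through the global top $\varepsilon$, while those of $]\tau]$ are defined through the local top $\tau$, so the two relations $\mathcal{J}$ and $\mathcal{J}^{(\tau)}$ cannot be matched by inspection. The sandwiching computation is exactly what bridges this gap, funnelling both upward closures into the single quantity $\tau x\tau$. Note that condition $\mathbf{\Lambda}$ is not needed for the identification $J=J_{\tau}^{(\tau)}$ itself; it enters only inside the already-proved Lemma~\ref{idp}, where it guarantees that the left ideal element $\varepsilon x$ commutes with $\tau$.
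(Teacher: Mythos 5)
Your proposal is correct and takes essentially the same route as the paper: both arguments reduce the theorem to Lemma \ref{idp} applied with $e=\tau$ in the principal subsemigroup $]\tau]$, the paper establishing only the inclusion $J \subseteq J_{\tau}^{(\tau)}$ and then verifying $t(ab)=\tau$ by a direct absorption computation, while you establish the full equality $J=J_{\tau}^{(\tau)}$ through the characterization $\tau x \tau = \tau$, which makes that final step automatic. One cosmetic slip in your closing remark: inside Lemma \ref{idp} (with $e=\tau$) condition $\mathbf{\Lambda}$ is used to commute the left ideal element $\tau x$ (not $\varepsilon x$) with $\tau$, but since you invoke that lemma as a black box this does not affect the validity of your argument.
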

\begin{proof}
Assume that $J$ is a $\mathcal{J}$-class in $S$ satisfying the Green condition. Write $\tau$ for the representative ideal element of $\mathcal{J}$ which from lemma \ref{kp} has to be idempotent and from lemma \ref{lp} satisfies $\tau=\varepsilon \tau \varepsilon$. We claim that $J=J_{\tau} \subseteq J^{(\tau)}_{\tau}$. To see this we note first that for every $x \in J=J_{\tau}$, $\varepsilon \tau \varepsilon=\varepsilon x\varepsilon$. It follows that for the ideal element of $]\tau]$ generated from $x$ we have that, 
\begin{align*}
\tau x \tau \vee \tau x \vee x \tau \vee x &= (\varepsilon \tau \varepsilon)x (\varepsilon \tau \varepsilon) \vee (\varepsilon \tau \varepsilon)x \vee x (\varepsilon \tau \varepsilon) \vee x\\
&=\varepsilon \tau (\varepsilon x\varepsilon) \tau \varepsilon \vee (\varepsilon \tau \varepsilon)x \vee x (\varepsilon \tau \varepsilon) \vee x\\
&=\varepsilon \tau^{3} \varepsilon \vee (\varepsilon \tau \varepsilon)x \vee x (\varepsilon \tau \varepsilon) \vee x\\
&= \varepsilon \tau \varepsilon = \tau
\end{align*}
since $(\varepsilon \tau \varepsilon)x \vee x (\varepsilon \tau \varepsilon) \vee x \leq \tau=\varepsilon \tau \varepsilon$, which shows that $x \in J^{(\tau)}_{\tau}$ thus proving the inclusion $J=J_{\tau} \subseteq J^{(\tau)}_{\tau}$. Let $a, b \in J_{\tau}$. Since $J_{\tau} \subseteq J^{(\tau)}_{\tau}$ and $J^{(\tau)}_{\tau}$ is a subsemigroup from lemma \ref{idp} we have that $\tau =\tau ab \tau \vee \tau ab \vee ab \tau \vee ab$. But
\begin{align*}
\tau ab \tau \vee \tau ab \vee ab \tau \vee ab & \leq \varepsilon ab \varepsilon \vee \varepsilon ab \vee ab \varepsilon \vee ab\\
& \leq \varepsilon \tau^{2} \varepsilon \vee \varepsilon \tau^{2} \vee \tau^{2} \varepsilon \vee \tau^{2} && \text{ since $a, b \leq \tau$}\\
& = \varepsilon \tau \varepsilon \vee \varepsilon \tau \vee \tau \varepsilon \vee \tau  && \text{ since $\tau = \tau^{2}$}\\
&= \tau,
\end{align*}
which proves that $\tau=\varepsilon ab \varepsilon \vee \varepsilon ab \vee ab \varepsilon \vee ab$ or equivalently that $ab \in J_{\tau}$ concluding the proof.
\end{proof}

\begin{proposition} \label{sd}
Let $S$ be a $le$-semigroup with the greatest element $\varepsilon$. Then $S$ is semisimple and satisfies the condition $\mathbf{\Lambda}$, if and only if the decomposition of $S$ into $\mathcal{J}$-classes forms a semilattice of left simple $\vee e$-semigroups. Furthermore, the $\mathcal{J}$-classes of this decomposition are semisimple, intra-regular and satisfy the condition $\mathbf{\Lambda}$.
\end{proposition}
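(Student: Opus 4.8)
\section*{Proof proposal}

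The plan is to prove both implications by working with the representative ideal elements, exploiting the fact that once every $\mathcal{J}$-class is a subsemigroup, Lemma \ref{lp} pins down $t(x)=\varepsilon x\varepsilon$ for every $x$. For the forward implication, assume $S$ is semisimple and satisfies $\mathbf{\Lambda}$. First I would observe that every representative ideal element $\tau$ is idempotent: semisimplicity gives $\tau\le\varepsilon\tau\varepsilon\tau\varepsilon=(\varepsilon\tau)(\varepsilon\tau\varepsilon)\le\tau^{2}\le\tau$, so $\tau=\tau^{2}$ (in particular $\varepsilon$ itself is idempotent, being its own representative). By Theorem \ref{t-idemp} every class then satisfies the Green condition, and by Theorem \ref{main} every class $J_{\tau}$ is a subsemigroup. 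Hence Lemma \ref{lp} applies throughout and yields $t(x)=\varepsilon x\varepsilon=\tau$ together with $\varepsilon\tau=\tau\varepsilon=\tau$ and $\tau x\varepsilon=\varepsilon x\tau=\tau$ for $x\in J_{\tau}$. The single most useful consequence I would extract is that $\tau x=\tau$ for every $x\in J_{\tau}$: since $\tau$ is an ideal (hence left ideal) element and $\varepsilon x$ is a left ideal element, $\mathbf{\Lambda}$ gives $\tau\cdot\varepsilon x=\varepsilon x\cdot\tau$, whose left side is $\tau x$ and whose right side is $\varepsilon x\tau=\tau$. Closure of $J_\tau$ under $\vee$ is immediate ($x,y\in J_\tau\Rightarrow x\le x\vee y\le\tau\Rightarrow t(x\vee y)=\tau$), so each $J_\tau$ is a $\vee e$-semigroup with greatest element $\tau$, and left simplicity follows at once, for if $z\in J_{\tau}$ is a left ideal element of $J_{\tau}$ then $\tau=\tau z\le z\le\tau$.

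The crux of the forward direction, and the step I expect to be the main obstacle, is condition (3), namely $J_{\tau}J_{\sigma}\subseteq J_{\tau\sigma}$, which amounts to the multiplicative identity $t(ab)=t(a)\,t(b)=\tau\sigma$ for $a\in J_\tau$, $b\in J_\sigma$. The inclusion $t(ab)\le\tau\sigma$ is easy, since $ab\le\tau\sigma$ and $\tau\sigma$ is an ideal element. The reverse inequality is where a naive attempt stalls: writing $\tau\sigma=\varepsilon a\varepsilon b\varepsilon$ and $t(ab)=\varepsilon ab\varepsilon$, one is asked to delete the interior $\varepsilon$, and neither semisimplicity nor monotonicity does this directly. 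The device I would use is to set $\rho:=t(ab)$ and apply $\mathbf{\Lambda}$ to the left ideal elements $\rho$ and $\varepsilon a$, giving $\rho a=\rho\varepsilon a=\varepsilon a\,\rho=\varepsilon a\varepsilon ab\varepsilon=(\varepsilon a\varepsilon a)(b\varepsilon)=\tau b\varepsilon=\tau\sigma$, where I used $\tau a=\tau$ in the form $\varepsilon a\varepsilon a=\varepsilon a\varepsilon=\tau$ and $\tau\varepsilon=\tau$. Since $\rho a\le\rho\varepsilon=\rho$, this forces $\tau\sigma\le\rho$, hence $t(ab)=\tau\sigma$. This simultaneously shows that $\mathcal{J}$ is a congruence and that the induced operation on classes (multiplication of representatives) is commutative ($\tau\sigma=\sigma\tau$, again by $\mathbf{\Lambda}$) and idempotent ($\tau^{2}=\tau$); so $\mathcal{J}$ is a semilattice congruence and conditions (1)--(3) hold.

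For the converse I would reverse-engineer the same facts from the hypothesis. As each $\mathcal{J}$-class is a subsemigroup (condition (3) with $\alpha\alpha=\alpha$), Theorem \ref{t-idemp} again makes the representatives idempotent and Lemma \ref{lp} again gives $t(x)=\varepsilon x\varepsilon=\tau$. Left simplicity of $J_\tau$ now recovers $\tau x=\tau$ directly, since $\tau x$ is a left ideal element of $J_\tau$ and therefore equals the top $\tau$. Semisimplicity of $S$ then drops out, because $\varepsilon x\varepsilon x\varepsilon=(\varepsilon x\varepsilon x)\varepsilon=\tau\varepsilon=\tau\ge x$. For $\mathbf{\Lambda}$ I would prove the clean structural fact that every left ideal element is an ideal element: if $\ell$ is a left ideal element then $]\ell]$ is a left ideal of $S$ (for $x\le\ell$ and $s\in S$ one has $sx\le\varepsilon x\le\varepsilon\ell\le\ell$), and left simplicity of the pieces forces $]\ell]$ to be a union of whole classes (from $y\le x$ or $y\le zx\le\varepsilon x\le\ell$), so its greatest element $\ell$ lies in a single $J_\beta$ and equals $\tau_\beta$. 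Thus left ideal elements coincide with the ideal elements, and these commute: every ideal element is idempotent (Theorem \ref{t-idemp}), so $\tau\wedge\sigma=(\tau\wedge\sigma)^{2}\le\tau\sigma\le\tau\wedge\sigma$ yields $\tau\sigma=\tau\wedge\sigma=\sigma\tau$.

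Finally the ``furthermore'' is immediate in either direction from $\tau x=\tau$: within $J_\tau$ one has $\tau x\tau=\tau$ and $\tau x^{2}\tau=\tau\ge x$, so each class is semisimple and intra-regular, and having $\tau$ as its only left ideal element it satisfies $\mathbf{\Lambda}$ trivially. The one genuinely delicate point, on which I would spend the most care, is the interior-$\varepsilon$ deletion in the forward product identity; everything else is bookkeeping around Lemmas \ref{kp}, \ref{lp}, \ref{idp} and Theorems \ref{t-idemp}, \ref{main}.
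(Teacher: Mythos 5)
Your forward direction is correct, and it takes a genuinely more self-contained route than the paper's: where the paper invokes Proposition 9 and Proposition 13 of \cite{tbt} (for the semilattice of ideal elements and for intra-regularity via semiprimeness), you derive everything directly from the workhorse identity $\tau x=\tau$ (obtained from $\mathbf{\Lambda}$ applied to $\tau$ and $\varepsilon x$, plus Lemma \ref{lp}), and your proof of the product identity $t(ab)=\tau\sigma$ — commuting $\rho=t(ab)$ past $\varepsilon a$ to get $\rho a=\varepsilon a\varepsilon ab\varepsilon=\tau\sigma\le\rho$ — is a clean alternative to the paper's manipulation of $\varepsilon xy\varepsilon$ using $\mathbf{\Lambda}$ on $\varepsilon$ and $\varepsilon x$. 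I checked each step (idempotency of representatives from semisimplicity, applicability of Theorems \ref{t-idemp} and \ref{main}, the use of Lemma \ref{lp} in the class $J_\rho$, and the ``furthermore'' computations $\tau x\tau=\tau$ and $\tau x^{2}\tau=\tau$) and they all hold.

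The converse, however, has a genuine gap: your proof that every left ideal element $\ell$ of $S$ is an ideal element rests on the claim that $]\ell]$ is a union of whole $\mathcal{J}$-classes, justified by the parenthetical ``from $y\le x$ or $y\le zx\le\varepsilon x\le\ell$''. This is not how $\mathcal{J}$ works in this paper: $y$ and $x$ are in the same class iff $t(y)=t(x)$, which only gives $y\le \varepsilon x\varepsilon\vee x\varepsilon\vee \varepsilon x\vee x$ — a join in a lattice, not a disjunction of cases — and even if one had such a case split, your list omits precisely the right-multiplied terms $x\varepsilon$ and $\varepsilon x\varepsilon$, which a merely-left ideal element cannot absorb ($x\varepsilon\le\ell\varepsilon$ need not be $\le\ell$). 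In effect, knowing that $]\ell]$ is a union of classes is tantamount to already knowing $\ell$ is two-sided, so the step is circular. The good news is that your intended conclusion is true and has a one-line proof, which is exactly the observation the paper makes: $\ell$ lies in some class $J_\beta$ with representative $\beta$, and $\beta\ell\le\varepsilon\ell\le\ell$ makes $\ell$ a left ideal element of the left simple $\vee e$-semigroup $J_\beta$, whence $\ell=\beta$. With that substitution your finish is fine ($\tau\wedge\sigma$ is an idempotent ideal element, so $\tau\sigma=\tau\wedge\sigma=\sigma\tau$); the paper itself shortcuts even this by applying left simplicity directly to the two left ideal elements $bc$ and $cb$, which land in the same class $J_{\beta\wedge\gamma}$ and must both equal its top.
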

\begin{proof}
$\Rightarrow$. Since $S$ is semisimple, then Proposition 9 of \cite{tbt} implies that the set of ideal elements $I$ forms a semilattice and theorem \ref{main} implies that for any $\alpha \in I$ its $\mathcal{J}$-class $J_{\alpha}$ is a $\vee e$-semigroup with the greatest element $\alpha$. Next we prove that any ideal element $t$ in the $\vee e$-semigroup $J_{\alpha}$ is semiprime. This will be immediate if we show that $J_{\alpha}$ is left simple since this means in particular that $\alpha$ is the only ideal element in $J_{\alpha}$ which is obviously semiprime. Proposition 13 of \cite{tbt} would then imply that $J_{\alpha}$ is an intra-regular $\vee e$-semigroup. Now we prove that for every $\alpha \in I$, the only left ideal element in the $\vee e$-semigroup $J_{\alpha}$ is $\alpha$ itself. Indeed, if $b \in J_{\alpha}$ is a left ideal element there, then
\begin{align*}
b \leq \alpha &= (\alpha b) \alpha && \text{ from lemma \ref{lp}}\\
&= \alpha^{2} b=\alpha b && \text{ since $\mathbf{\Lambda}$ holds true}\\
& \leq b && \text{ $b$ is a left ideal element in $J_{\alpha}$,}
\end{align*}
proving that $b=\alpha$. This also shows that $J_{\alpha}$ is a semisimple $\vee e$-semigroup. Next we prove that every two left ideals of $J_{\alpha}$ commute with each other. This is immediate under our assumption for $\mathbf{\Lambda}$ if we show that left ideals of $J_{\alpha}$ are also left ideals of $S$. To prove the latter we let $b \in J_{\alpha}$ be a left ideal there. First we note that $\varepsilon b \leq \varepsilon \alpha \leq \alpha$ and that $\varepsilon b \in J_{\alpha}$ since
\begin{align*}
\varepsilon (\varepsilon b) \varepsilon \vee \varepsilon(\varepsilon b) \vee (\varepsilon b) \varepsilon \vee \varepsilon b &=\varepsilon b \varepsilon \\
&=\alpha && \text{ from lemma \ref{lp}}.
\end{align*}
Further
\begin{align*}
b \geq \alpha b&=(\alpha \varepsilon)b && \text{ since $b$ is a left ideal and from lemma \ref{lp}}\\
& = \alpha^{2} (\varepsilon b) && \text{ since $\alpha$ is an idempotent}\\
& = \alpha (\varepsilon b) \alpha && \text{ from condition $\mathbf{\Lambda}$}\\
& = \alpha (\varepsilon b) \alpha^{2} \geq \alpha (\varepsilon b)(\varepsilon b) \alpha && \text{ from $\varepsilon b \leq \alpha$}\\
& \geq \varepsilon b && \text{ since $J_{\alpha}$ is intra-regular,}
\end{align*}
proving that $b$ is a left ideal element of $S$. Finally we prove that if $\alpha, \beta$ are ideal elements of $S$, then $J_{\alpha} \cdot J_{\beta} \subseteq J_{\alpha \beta}$. Let $x \in J_{\alpha}$ and $y \in J_{\beta}$, hence from lemma \ref{lp} $\varepsilon x \varepsilon =\alpha$ and $\varepsilon y \varepsilon =\beta$ and want to prove that $\varepsilon xy \varepsilon =\alpha \beta$. Indeed,
\begin{align*}
\alpha \beta \geq \varepsilon (\alpha \beta) \varepsilon \geq \varepsilon (x y) \varepsilon &=\varepsilon(\varepsilon x)(y \varepsilon) && \text{ since $\varepsilon$ is an idempotent}\\
&= (\varepsilon x) \varepsilon( y\varepsilon ) && \text{ from condition $\mathbf{\Lambda}$}\\
&= \varepsilon(x \varepsilon y) \varepsilon,
\end{align*}
hence we have
$$\alpha \beta \geq \varepsilon (x y) \varepsilon \geq \varepsilon(x \varepsilon y) \varepsilon.$$
Further we see that
$$ \varepsilon (\varepsilon(x \varepsilon y) \varepsilon) \varepsilon=( \varepsilon x \varepsilon) (\varepsilon y \varepsilon)=\alpha \beta,$$
therefore $\varepsilon (x y) \varepsilon =\alpha \beta$ showing that $xy \in J_{\alpha \beta}$. \newline
$\Leftarrow$. Since each $\mathcal{J}$-class is a subsemigroup, then its representative ideal element is an idempotent and $S$ is semisimple. Before we show that two arbitrary left ideal elements of $S$ commute we observe that if $b \in J_{\beta}$ is a left ideal element of $S$ and $\beta$ is the representative ideal element, then $b$ is a left ideal element in the $\vee e$-semigroup $J_{\beta}$ since $\beta b \leq \varepsilon b \leq b$.
Let now $b$ and $c$ be left ideal elements of $S$ and let $J_{\beta}$ and $J_{\gamma}$ be the $\mathcal{J}$-classes of $S$ containing $b$ and $c$ respectively where $\beta$ and $\gamma$ are the respective ideal elements. Since the $\mathcal{J}$-classes form a semilattice and since $S$ is semisimple, then 
$$bc \in J_{\beta}J_{\gamma} \subseteq J_{\beta \gamma}=J_{\beta \wedge \gamma},$$
and similarly
$$cb \in J_{\gamma}J_{\beta} \subseteq J_{\gamma \beta}=J_{\gamma \wedge \beta}.$$
So there are left ideal elements of $S$, namely $bc$ and $cb$ belonging to the same $\mathcal{J}$-class $J_{\beta \wedge \gamma}$, hence $bc=cb$ since the class is left simple. This proves that $S$ satisfies condition $\mathbf{\Lambda}$.
\end{proof} 
As we mentioned in the introduction, the result of proposition \ref{sd} is close to that of theorem 1 of \cite{intra-93} which holds true for ordered semigroups in general and that identifies intra-regular ordered semigroups with semilattices of simple semigroups. Taking into account this similarity it is reasonable to raise the following question. \textit{Is there any connection between intra-regular $le$-semigroups and $le$-semigroups which are semisimple and satisfy the condition $\mathbf{\Lambda}$?} We note that on the one direction it is clear that intra-regular $le$-semigroups are semisimple (see \cite{intra-80}) and on the other direction it is not difficult to see that semisimple $le$-semigroups that satisfy $\mathbf{\Lambda}$ are intra-regular.

\end{document}